\newtheorem{theorem}{Theorem}
\newtheorem{remark}{Remark}
\newtheorem{lem}{Lemma}
	\title[Optimal data for a Reaction-Advection-Diffusion model]{The optimal initial datum  for a class of reaction-advection-diffusion equations}
	\author{Omar Abdul Halim}
	\address{Department of Mathematics and Statistics\\
		University of Northern British Columbia\\
		Prince George, BC, V2N 4Z9}
	\email{abdul@unbc.ca}
	\author{Mohammad El Smaily}
	\address{Department of Mathematics and Statistics\\
		University of Northern British Columbia\\
		Prince George, BC, V2N 4Z9}
	\email{smaily@unbc.ca}
	\thanks{Mohammad El Smaily acknowledges partial support from the Natural Sciences and Engineering Research Council of Canada through the NSERC Discovery Grant RGPIN-2017-04313.}
	\thanks{Corresponding Author: Mohammad El Smaily}
	\date{August 19, 2021}
\begin{document}	
	
\maketitle
	\begin{abstract}We consider a reaction-diffusion model with a drift term in a bounded domain. Given a time $T,$ we prove the existence and uniqueness of an initial datum that maximizes the total mass $\textstyle{\int_\Omega u(T,x)\mathrm{d}x}$ in the presence of an advection term. In a population dynamics context, this optimal initial datum can be understood as the best  distribution of the initial population that leads  to a maximal the total population at a prefixed time $T.$  We also compare the total masses at a time $T$ in two cases: depending on whether an advection term is present  in the medium or not. We prove that the presence of a large enough advection enhances the  total mass.	\end{abstract}	
\paragraph{Keywords} {optimal initial data, optimization in semilinear parabolic equations, maximal mass, enhancement by advection}
\paragraph{AMS subject classification} 35K55,  35K57, 35K58.
\section{Introduction and main results}
In this paper, we study an optimization problem associated with a reaction-diffusion model that  involves an advection term. The model reads 
\begin{equation}\label{eq}
	\left\{\begin{array}{ll}
		\partial_tu-\sigma \Delta u-q\cdot\nabla_x u=f(u) & \text{in $(0,T)\times\Omega$},\vspace{7 pt}\\
		u(0,x)=u_0(x) & \text{in $\Omega,$}\vspace{7 pt}\\
		\frac{\partial u}{\partial\nu}(t,x)=0, & \text{for all $t\in(0,T)$ and   $x\in\partial\Omega$},
	\end{array}\right.
\end{equation}
where $\Omega$ is a bounded, connected and smooth domain of $\mathbb{R}^N,$ $\sigma >0$ is a positive constant (standing for the diffusivity of the medium) and $\nu(x)$ is the outward unit normal to $\partial \Omega$ at  a point $x.$

Model \eqref{eq} appears in studies related to population dynamics , chemical reactions (see \cite{Cantrell} and \cite{Murray} for e.g.) and mixing processes (see \cite{Constantine} and the references therein). The function $u=u(t,x)$ then stands for the evolving density at a time $t$ and a location $x\in\Omega.$ Our present work aims to answer natural questions related to the maximization of the total mass $\textstyle{\int_\Omega u(T,x)~\rm{d}x}$ at a given time $T>0.$ More precisely, at a prefixed  time $T>0,$   our goal is to maximize the functional \begin{equation}\label{define.IT}\mathcal{I}_T(u_0):=\int_\Omega u(T,x)\,\mathrm{d}x
\end{equation} 
among all possible initial data $u_0\in \mathcal{A}_m,$ where 
\begin{equation}\label{m}\mathcal{A}_m:=\left\{u_0\in \mathcal{A},~ \int_\Omega u_0=m\right\}, \quad \mathcal{A}:=\bigg\{u_0\in L^1(\Omega),~ 0\le u_0\le1\bigg\},
\end{equation} 
and $u(t,x)$ is the solution of \eqref{eq} with an initial datum $u(0,\cdot)=u_0(\cdot).$

In \eqref{eq}, when $u$ represents the density of a population, the quantity $\textstyle{\int_\Omega u(t,x)\,\mathrm{d}x}$ is the total population at a given time $t.$ In a population dynamics context, our first problem can then be phrased as follows:
\begin{quote}{\it Given an initial total population $m,$ we investigate whether there is  an optimal   distribution of individuals, at the initial time, that makes  the total population at a time $T$  maximal. Moreover, if such an optimal initial datum exists, we seek answers related to its uniqueness.}
\end{quote}
The answers to these questions are given in the first two theorems of this paper.

\paragraph{Hypotheses} Let us state the assumptions we need on the terms $q$ and $f$ appearing in \eqref{eq}. Again, 
  $\Omega$ is a bounded, connected and smooth domain of $\mathbb{R}^N.$ The reaction term  $f$ satisfies the following assumptions: 
\begin{equation}\label{on.f}
	\begin{cases}
		f\in C^1(\mathbb{R}),\vspace{7 pt}\\
		\exists M>0, \text{ such that } \sup_{s\in[0,1]}|f'(s)|\le M,\vspace{7 pt}\\
		f(0)=f(1)=0.
	\end{cases}
\end{equation}
The underlying advection $q(x)=(q_1(x),\dots,q_N(x))$ is a vector field that satisfies 
\begin{equation}\label{cq}
	q\in C^{1,\delta}(\overline\Omega) ~\text{ for some $\delta>0,$
	}\quad\text{ and }\quad  q\cdot\nu=0 \text{ on }\partial \Omega.\end{equation}

\paragraph{On the regularity of solutions}The fact that $0\leq u_0\leq 1$ and $\Omega$ is bounded imply that $u_0\in L^p$ for any $p\geq 1.$ Since $q\in C^{1,\delta}(\Omega),$ and $f\in C^1$ it follows that  the solution $u$ is $C^{2,\alpha}$ (for some $0<\alpha<1$) in the space variable, and $C^1$ in $t$ (see Chapter 4 -- Friedman \cite{Friedman} and Wang \cite{Wang}).

Under the above assumptions, problem \eqref{eq} has a unique solution $u(t,x),$ such that \begin{equation}\label{0<u<1}
	0\le u(t,x)\le1, \text{ for all }(t,x)\in[0,T]\times\Omega.
	\end{equation}We explain \eqref{0<u<1} as follows.  Since $0\le u_0(x)\le1$ and $f(0)=f(1)=0,$ it follows that $\bar{U}=1$ is a supersolution of \eqref{eq} and $\underline{U}=0$ is a subsolution of \eqref{eq}. An application of the maximum principle  allows us to conclude that $0\leq u\leq 1.$

	The fact that $0\le u(t,x)\le1, \text{ for all }(t,x)\in[0,T]\times\Omega$  implies that the operator $\mathcal{I}_T:\mathcal{A}\to\mathbb{R},$ defined by
\[ \mathcal{I}_T(u_0)=\int_\Omega u(T,x)\,\mathrm{d}x,\]
where $u$ is the solution of equation \eqref{eq} assumes finite values, whenever the argument $u_0$  belongs to the family $\mathcal{A}.$ Note that  $u_0$ plays the role of an initial value for \eqref{eq}, which leads to  the integrand of our functional $\mathcal{I}_T$.

  Our first result answers the question about the existence of a maximizer. 
  
\begin{theorem}[Existence of an optimal initial datum]\label{th1} Let $\Omega$ be a bounded domain and let $f$ and $q$ satisfy the assumptions stated in \eqref{on.f} and \eqref{cq} respectively. Then, there exists $\bar{u}_0\in \mathcal{A}_m$ such that 	\[\max_{u_0\in \mathcal{A}_m}\mathcal{I}_T(u_0)=\mathcal{I}_T(\bar{u}_0).\]
\end{theorem}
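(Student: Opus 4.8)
The plan is to use the direct method in the calculus of variations. First I would take a maximizing sequence $(u_0^n)_{n\in\mathbb{N}}\subset \mathcal{A}_m$, that is, $\mathcal{I}_T(u_0^n)\to \sup_{u_0\in\mathcal{A}_m}\mathcal{I}_T(u_0)=:S$; note $S$ is finite because $0\le u(t,x)\le 1$ forces $\mathcal{I}_T\le |\Omega|$ on all of $\mathcal{A}$. Since $0\le u_0^n\le 1$ on the bounded domain $\Omega$, the sequence is bounded in $L^\infty(\Omega)\subset L^2(\Omega)$, so up to a subsequence $u_0^n\rightharpoonup \bar u_0$ weakly in $L^2(\Omega)$ (equivalently weak-$*$ in $L^\infty$). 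The constraint set $\mathcal{A}_m$ is convex and closed in $L^2$, hence weakly closed, so $\bar u_0\in\mathcal{A}_m$: the bounds $0\le\bar u_0\le 1$ and the mass condition $\int_\Omega\bar u_0=m$ both pass to the weak limit (the mass functional $u_0\mapsto\int_\Omega u_0$ is weakly continuous).

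The crux is then to show $\mathcal{I}_T(\bar u_0)\ge S$, for which it suffices to prove the weak sequential continuity (or at least upper semicontinuity) of $u_0\mapsto\mathcal{I}_T(u_0)$ along this sequence. Let $u^n$ denote the solution of \eqref{eq} with datum $u_0^n$ and $\bar u$ the solution with datum $\bar u_0$. By \eqref{0<u<1} each $u^n$ is uniformly bounded in $L^\infty((0,T)\times\Omega)$; feeding this into parabolic $L^p$ and Schauder estimates (as already invoked in the excerpt via Friedman and Wang) gives uniform bounds on $(u^n)$ in, say, $W^{2,1}_p$ on $(\tau,T)\times\Omega$ for every $\tau>0$, and via the Aubin--Lions lemma a subsequence converges strongly in $C([\tau,T];L^2(\Omega))$ and a.e.\ to some limit $v$. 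The main obstacle is identifying $v=\bar u$: one must pass to the limit in the weak formulation of \eqref{eq}. The linear terms $\partial_t u^n$, $\sigma\Delta u^n$, $q\cdot\nabla u^n$ pass by weak convergence together with the fixed test functions; the reaction term $f(u^n)$ passes because $f\in C^1$ (hence Lipschitz on $[0,1]$ by the bound $|f'|\le M$), so $f(u^n)\to f(v)$ strongly in $L^2$; and the initial condition passes because $u^n(0,\cdot)=u_0^n\rightharpoonup\bar u_0$ while the trace at $t=0$ is stable under the $W^{2,1}_p$-type convergence. By uniqueness of the solution to \eqref{eq} with datum $\bar u_0$ we conclude $v=\bar u$.

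Finally, since $u^n(T,\cdot)\to \bar u(T,\cdot)$ strongly in $L^2(\Omega)$ (in particular in $L^1(\Omega)$ as $\Omega$ is bounded), we get $\mathcal{I}_T(u_0^n)=\int_\Omega u^n(T,x)\,\mathrm{d}x\to\int_\Omega \bar u(T,x)\,\mathrm{d}x=\mathcal{I}_T(\bar u_0)$. Combined with $\mathcal{I}_T(u_0^n)\to S$ this yields $\mathcal{I}_T(\bar u_0)=S=\sup_{\mathcal{A}_m}\mathcal{I}_T$, and since $\bar u_0\in\mathcal{A}_m$ the supremum is attained, which proves the theorem. The step I expect to require the most care is the compactness/limit-passage argument: getting compactness of $(u^n)$ near $t=0$ is delicate because the data $u_0^n$ converge only weakly, so one either works on $(\tau,T)$ and sends $\tau\to 0$, or exploits the specific structure (e.g.\ a variation-of-constants / semigroup representation $u^n(T)=e^{T\mathcal{L}}u_0^n+\int_0^T e^{(T-s)\mathcal{L}}f(u^n(s))\,\mathrm{d}s$, where the first term converges weakly and the smoothing of $e^{(T-s)\mathcal{L}}$ upgrades it, while the second term is compact) to handle the endpoint directly.
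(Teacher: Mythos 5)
Your proposal is correct in outline and follows the same direct-method skeleton as the paper (maximizing sequence, weak-$\star$ limit of the data in $L^\infty$, convexity/closedness giving $\bar u_0\in\mathcal{A}_m$, compactness of the solutions, identification of the limit, conclusion), but the compactness mechanism you use is genuinely different. The paper relies on the global-in-time energy estimates of Lemma \ref{bounded} ($U^n$ bounded in $L^2(0,T;H^1(\Omega))$ and $\partial_t U^n$ bounded in $L^2(0,T;H^{-1}(\Omega))$), applies Aubin--Lions on all of $(0,T)$ to get strong $L^2_{t,x}$ convergence (enough to pass $f(U^n)\to f(U)$), and never needs strong convergence at $t=T$: since $\mathcal{I}_T$ is linear in $u(T,\cdot)$, weak $L^2$ convergence of $U^n(T,\cdot)$ tested against $\varphi\equiv 1$ suffices, and the traces at $t=0$ and $t=T$ are identified by integrating by parts in time in the weak formulation \eqref{weak}, where the initial datum enters only through $\int_\Omega u_0^n\varphi(0,\cdot)$ and hence passes by weak-$\star$ convergence alone. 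You instead invoke parabolic $W^{2,1}_p$/Schauder smoothing on $(\tau,T)$ to get strong convergence in $C([\tau,T];L^2(\Omega))$, which is valid (the interior-in-time estimates depend only on the uniform bound $0\le u^n\le 1$ and the coefficients) and buys you strong convergence of $u^n(T,\cdot)$, more than is actually needed. The one statement you should not keep as written is that ``the trace at $t=0$ is stable under the $W^{2,1}_p$-type convergence'': those bounds cannot hold up to $t=0$ uniformly, precisely because the data are only $L^\infty$ and converge only weakly; but you flag this yourself, and either of your proposed repairs works --- pass to the limit in the weak formulation on $(0,T)$ (where, as above, the $t=0$ term needs only weak-$\star$ convergence of $u_0^n$, and the nonlinearity is handled on $(\tau,T)$ plus a uniform $O(\tau)$ remainder), or use the Duhamel representation and uniqueness. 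With that point settled, your argument closes; the paper's route is somewhat more elementary in that it avoids parabolic regularity theory altogether and exploits the linearity of the target functional.
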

The following result confirms the uniqueness of the optimal initial datum. 
\begin{theorem}[Uniqueness of the optimal initial datum]\label{th2}
	Suppose that $f$ and $q$ satisfy  the assumptions \eqref{on.f} and \eqref{cq} respectively. Furthermore, assume that $f$ is strictly concave. Then, the optimal initial datum $\bar{u}_0,$ obtained in Theorem \ref{th1}, is unique.
\end{theorem}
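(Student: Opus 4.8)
\emph{Strategy.} The plan is to show that, when $f$ is strictly concave, the functional $\mathcal{I}_T$ is \emph{strictly concave} on the convex set $\mathcal{A}_m$; since Theorem \ref{th1} already guarantees that a maximizer exists, strict concavity forces it to be unique. First I would record that $\mathcal{A}_m$ is convex: if $u_0,v_0\in\mathcal{A}_m$ and $\lambda\in(0,1)$, then $w_0:=\lambda u_0+(1-\lambda)v_0$ still satisfies $0\le w_0\le1$ and $\int_\Omega w_0=m$, so $w_0\in\mathcal{A}_m$.

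\emph{Concavity of the flow.} Let $u,v,w$ be the solutions of \eqref{eq} with initial data $u_0,v_0,w_0$ respectively, and set $z:=\lambda u+(1-\lambda)v$. By \eqref{0<u<1} all three solutions take values in $[0,1]$, hence so does $z$; using the concavity of $f$,
\[
\partial_t z-\sigma\Delta z-q\cdot\nabla_x z=\lambda f(u)+(1-\lambda)f(v)\le f\bigl(\lambda u+(1-\lambda)v\bigr)=f(z)\quad\text{in }(0,T)\times\Omega,
\]
so $z$ is a subsolution of the problem solved by $w$, with the same initial datum $w_0$ and the same homogeneous Neumann condition. Since $f$ is $M$-Lipschitz on $[0,1]$ by \eqref{on.f} and all functions involved stay in $[0,1]$, the parabolic comparison principle applies and yields $z\le w$ on $[0,T]\times\overline\Omega$. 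Integrating over $\Omega$ at $t=T$ gives $\mathcal{I}_T(w_0)\ge\lambda\mathcal{I}_T(u_0)+(1-\lambda)\mathcal{I}_T(v_0)$; that is, $\mathcal{I}_T$ is concave on $\mathcal{A}_m$.

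\emph{Strict concavity.} Suppose now $u_0\ne v_0$ in $\mathcal{A}_m$, and set $\psi:=w-z\ge0$ and $g:=f(z)-\lambda f(u)-(1-\lambda)f(v)\ge0$. Writing $f(w)-f(z)=c(t,x)\,\psi$ with $c(t,x):=\int_0^1 f'\bigl((1-s)z+sw\bigr)\,\mathrm{d}s$, so that $|c|\le M$, one checks that $\psi$ solves the linear parabolic problem
\[
\partial_t\psi-\sigma\Delta\psi-q\cdot\nabla_x\psi-c(t,x)\,\psi=g\ge0\quad\text{in }(0,T)\times\Omega,
\]
with $\psi(0,\cdot)=0$ and $\partial_\nu\psi=0$ on $\partial\Omega$. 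Because $f$ is strictly concave, $g(t,x)>0$ at every point where $u(t,x)\ne v(t,x)$; and since $u_0\ne v_0$ on a set of positive measure while $u(t,\cdot),v(t,\cdot)$ converge to $u_0,v_0$ as $t\to0^+$, the function $g$ is not identically zero on $(0,T)\times\Omega$. I would then invoke the strong minimum principle for parabolic operators — reducing to the case of a nonnegative zeroth-order coefficient via the substitution $\psi=e^{\Lambda t}\eta$ with $\Lambda>M$, and handling points of $\partial\Omega$ through the Hopf boundary-point lemma together with the Neumann condition — to conclude: if $\psi$ vanished at some $(T,x_1)$ with $x_1\in\overline\Omega$, then $\psi\equiv0$ on $[0,T]\times\overline\Omega$, forcing $g\equiv0$, hence $u\equiv v$ and in particular $u_0=v_0$, a contradiction. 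Therefore $\psi(T,\cdot)>0$ on all of $\overline\Omega$, whence
\[
\mathcal{I}_T\bigl(\lambda u_0+(1-\lambda)v_0\bigr)=\int_\Omega z(T,x)\,\mathrm{d}x+\int_\Omega\psi(T,x)\,\mathrm{d}x>\lambda\mathcal{I}_T(u_0)+(1-\lambda)\mathcal{I}_T(v_0).
\]

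\emph{Conclusion.} Thus $\mathcal{I}_T$ is strictly concave on $\mathcal{A}_m$. If $\bar u_0$ and $\tilde u_0$ were two distinct maximizers with common value $\mathcal{M}=\max_{\mathcal{A}_m}\mathcal{I}_T$, then $\tfrac12(\bar u_0+\tilde u_0)\in\mathcal{A}_m$ and $\mathcal{I}_T\bigl(\tfrac12(\bar u_0+\tilde u_0)\bigr)>\mathcal{M}$, contradicting Theorem \ref{th1}; hence the optimal initial datum is unique. I expect the main difficulty to be the backward step in the third paragraph — deducing that $\psi$ vanishes on the whole cylinder from its vanishing at the single time $t=T$ — which is why I would route it through the strong minimum principle rather than a direct backward-uniqueness estimate, taking care to make both the comparison principle and the strong minimum principle valid up to the Neumann boundary.
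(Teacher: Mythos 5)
Your proposal is correct, and at the top level it is the same strategy as the paper: show $\mathcal{I}_T$ is strictly concave on the convex set $\mathcal{A}_m$, then conclude uniqueness from existence by averaging two putative maximizers. The first stage also coincides: the convex combination $z=\lambda u+(1-\lambda)v$ is a subsolution of the equation solved by $w$, and the comparison principle gives $z\le w$. Where you genuinely diverge is the strictness step. The paper argues by contradiction at the final time: assuming $v(T,\cdot)\equiv U(T,\cdot)$ on $\Omega$, it evaluates the (strict) differential inequality at $t=T$, cancels the spatial derivatives, gets $\partial_t(U-v)(T,\cdot)>0$, and contradicts $v\le U$ at time $T-\varepsilon$; this is elementary but leans on the pointwise strict inequality $\lambda f(U_1)+(1-\lambda)f(U_2)<f(\lambda U_1+(1-\lambda)U_2)$ holding everywhere, which strict concavity only delivers at points where $U_1\neq U_2$. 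Your route instead isolates the nonnegative source $g=f(z)-\lambda f(u)-(1-\lambda)f(v)$, writes a linear equation for $\psi=w-z$ with bounded coefficient $c$, shows $g\not\equiv 0$ from $u_0\neq v_0$ and continuity at $t=0^+$, and then uses the strong minimum principle (after the standard $e^{\Lambda t}$ substitution) together with the Hopf boundary-point lemma at the Neumann boundary to propagate a zero of $\psi(T,\cdot)$ backward to $\psi\equiv 0$, hence $g\equiv 0$, a contradiction. What this buys is a cleaner handling of exactly the degenerate set $\{u=v\}$ that the paper's strict inequality glosses over, plus the stronger conclusion $\psi(T,\cdot)>0$ on all of $\overline\Omega$; the price is invoking heavier machinery (strong maximum principle and Hopf lemma up to the boundary) where the paper needs only the comparison principle and a first-order-in-time monotonicity argument.
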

\begin{remark} We highlight the importance of the assumption that $f$ is concave in order to obtain the uniqueness of  a maximizer we have in Theorem \ref{th1} above. Indeed, the recent result  by Mazari, Nadin and Marrero  \cite{nadin2} shows the existence of a non-unique  maximizer when $f$ is convex. Remark 2 of \cite{nadin2} mentions at least two maximizers, whenever $f$ is convex. 
\end{remark}
Theorems \ref{th1} and \ref{th2} answer the first question posed in the introduction. 
\paragraph{Influence of advection on the maximal total mass} Another natural question is 
\begin{quote}{\it whether the presence of an advection term influences the value of the optimal mass $\textstyle{\int_\Omega u(T,x)\,\mathrm{d}x}$ or not. Our second goal is to compare the optimal total mass in the case where an advection is present to the optimal total mass in the case where no advection is considered in the model. Does the addition  of an advection to the medium enhance the total mass at a time $T$?}
\end{quote}
 The above question is addressed in the following theorem. Under an additional assumption on the divergence of the advection field, the next theorem shows that the  total mass at time $T$ in model \eqref{eqA} is larger  than the maximal total mass in model \eqref{eqV}, even when the reaction-diffusion equation in \eqref{eqV} has an optimal initial datum. We will see in the proof that this is mainly because \eqref{eqA} has an advection term, while model \eqref{eqV} does not. 
%
%

\begin{theorem}[Enhancement of the total mass by advection]\label{th3}
	We assume that  $f$ and $q$ satisfy   \eqref{on.f} and \eqref{cq} respectively.  Moreover, we assume that  
	 \begin{equation}\label{div.q}\alpha:=\max_{x\in \overline{\Omega}}\nabla\cdot q(x)<0
	\end{equation}  
	and 
	\begin{equation}
f(s)\geq 0,~~ \text{for all }~s\in[0,1].
	\end{equation}
	For $U_A^0\in \mathcal{A}_m,$ let 
	$\mathcal{I}^A_T(U^0)=\int_\Omega U_A(T,x)\,\mathrm{d}x,$ where $U_A$ is the solution of 
	\begin{equation}\label{eqA}
		\left\{\begin{array}{ll}
			\partial_t U_A-\sigma \Delta U_A-Aq(x)\cdot \nabla_x U_A=f(U_A) & \text{in }(0,T)\times\Omega\vspace{7 pt}\\
			U_A(0,x)=U_0(x) & \text{in }\Omega, \vspace{7 pt}\\
			\frac{\partial U_A}{\partial\nu}(t,x)=0 ,& t\in(0,T), ~x\in\partial\Omega.
		\end{array}\right.
	\end{equation}
Let $\mathcal{I}_{T}(V_0)=\int_\Omega V(t,x)\,\mathrm{d}x,$ where $V$ is the solution of 
	\begin{equation}\label{eqV}
		\begin{cases}
			\partial_t V-\sigma \Delta V=f(V) & \text{in }(0,T)\times\Omega, \vspace{7 pt}\\
			V(0,x)=V_0(x) & \text{in }\Omega, \vspace{7 pt}\\
			\frac{\partial V}{\partial\nu}(t,x)=0, &  t\in(0,T), ~x\in\partial\Omega.
		\end{cases}
	\end{equation}

If \begin{equation}\label{threshold.advection}
A\ge -\frac{M|\Omega|}{m\alpha}>0,
\end{equation}we then have 
\begin{equation}\label{advection.enhances}
\inf_{u_0\in\mathcal{A}_m}\mathcal{I}_T^A(u_0)\geq \max_{V_0\in\mathcal{A}_m}\mathcal{I}_T(V_0).
 \end{equation}
\end{theorem}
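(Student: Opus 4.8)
The plan is to control both sides of \eqref{advection.enhances} through the total--mass functions
\[
\mu_A(t):=\int_\Omega U_A(t,x)\,\mathrm{d}x \qquad\text{and}\qquad \mu_V(t):=\int_\Omega V(t,x)\,\mathrm{d}x ,
\]
by producing a lower bound for $\mu_A(T)$ that is uniform over $U_0\in\mathcal{A}_m$ and an upper bound for $\mu_V(T)$ that is uniform over $V_0\in\mathcal{A}_m$, and then observing that the threshold \eqref{threshold.advection} is exactly what forces the first to dominate the second.

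First I would differentiate $\mu_A$ in time (legitimate by the parabolic regularity recalled above), discard the diffusion term using the Neumann condition ($\int_\Omega\Delta U_A=\int_{\partial\Omega}\partial_\nu U_A=0$), and integrate the drift term by parts: since $q\cdot\nu=0$ on $\partial\Omega$ the boundary contribution vanishes and $\int_\Omega q\cdot\nabla_x U_A=-\int_\Omega(\nabla\cdot q)\,U_A$, so
\[
\mu_A'(t)=-A\int_\Omega(\nabla\cdot q)(x)\,U_A(t,x)\,\mathrm{d}x+\int_\Omega f(U_A(t,x))\,\mathrm{d}x .
\]
Since $U_A\ge 0$, $\nabla\cdot q\le\alpha<0$ and $A>0$, the first integral is $\ge -A\alpha\,\mu_A(t)\ge 0$, and $f\ge 0$ on $[0,1]$ makes the second $\ge 0$; hence $\mu_A$ is nondecreasing and $\mu_A(t)\ge\mu_A(0)=m$ on $[0,T]$. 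Feeding this back, $\mu_A'(t)\ge -A\alpha\,\mu_A(t)\ge -A\alpha\,m$, and \eqref{threshold.advection} is precisely the inequality $-A\alpha\,m\ge M|\Omega|$; therefore $\mu_A'(t)\ge M|\Omega|$ on $[0,T]$, which integrates to $\mu_A(T)\ge m+M|\Omega|\,T$, uniformly in $U_0\in\mathcal{A}_m$.

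For \eqref{eqV} the same computation, now with no drift term, gives $\mu_V'(t)=\int_\Omega f(V(t,x))\,\mathrm{d}x$; from $f(0)=0$ and $|f'|\le M$ on $[0,1]$ one has $f(s)=\int_0^s f'\le Ms\le M$ for $s\in[0,1]$, so $\mu_V'(t)\le M|\Omega|$ and $\mu_V(T)\le m+M|\Omega|\,T$, uniformly in $V_0\in\mathcal{A}_m$. Hence, for every $U_0,V_0\in\mathcal{A}_m$,
\[
\mathcal{I}_T^A(U_0)=\mu_A(T)\ \ge\ m+M|\Omega|\,T\ \ge\ \mu_V(T)=\mathcal{I}_T(V_0),
\]
and taking the infimum over $U_0$ on the left and the maximum over $V_0$ on the right (this maximum exists by Theorem \ref{th1} applied with zero advection) yields \eqref{advection.enhances}.

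The differentiation under the integral sign and the integration by parts are routine; the point that really needs care is the sign bookkeeping — the drift enters $\mu_A'$ with a favorable sign only because $q\cdot\nu=0$ removes the boundary term while $\nabla\cdot q<0$ keeps the remaining volume term nonnegative — together with the observation that \eqref{threshold.advection} is calibrated so that the drift--induced growth rate $-A\alpha\,m$ dominates the crude reaction bound $M|\Omega|$ at \emph{every} time of $[0,T]$, which is why the intermediate estimate $\mu_A(t)\ge m$ must be established before the final integration.
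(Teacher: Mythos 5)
Your proposal is correct and follows essentially the same route as the paper: both arguments integrate (or differentiate) the equation over $\Omega$, use $q\cdot\nu=0$ and $\nabla\cdot q\le\alpha<0$ to show the mass $t\mapsto\int_\Omega U_A(t,x)\,\mathrm{d}x$ is nondecreasing and hence $\ge m$, deduce from the threshold \eqref{threshold.advection} the lower bound $m+M|\Omega|T$ uniformly in $U_0$, and bound $\mathcal{I}_T(V_0)$ above by $m+M|\Omega|T$ via $f(s)\le Ms\le M$ on $[0,1]$. The only difference is cosmetic: you phrase it as a differential inequality for $\mu_A$ while the paper works with the time-integrated identity.
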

Theorem \ref{th3} shows that a large enough advection  makes the total mass of \eqref{eqA} larger than the optimal mass of \eqref{eqV} that can be obtained with an optimal distribution of the initial population. 
\section*{Generalization to more heterogeneous settings}
The results above can be generalized to more heterogeneous settings, where the diffusivity and reaction is space/space-time dependent. More precisely, we can generalize the previous results to a model of the form
\begin{equation}\label{eqD}
	\left\{\begin{array}{l}
		\partial_tu-\nabla\cdot (D(x)\nabla u) -q\cdot\nabla_x u=f(t,x,u) \text{ in } (0,T)\times\Omega,\vspace{7 pt}\\
		u(0,x)=u_0(x)\quad   \text{ in }\Omega,\vspace{7 pt}\\
		\nu \cdot D(x)\nabla u(t,x)=0, \quad   t\in(0,T) \text{ and  }x\in\partial\Omega,
	\end{array}\right.
\end{equation}
where $D(x)=(D_{ij}(x))_{1\le i,\,j\le n}$ denotes a $C^2(\overline\Omega)$  matrix, such that
\begin{equation}\label{elliptic}
\exists \theta>0,\,\forall \xi\in\mathbb{R}^n,\quad \xi \cdot D(x)\xi\geq \theta|\xi|^2.\end{equation}
The advection  $q$ satisfies \eqref{cq}. The reaction term $f$  in the generalized model \eqref{eqD} is a function satisfying
\begin{equation}\label{on.ftxu}
	\begin{cases}
		f\in C^1((0,T)\times\overline\Omega\times\mathbb{R}),\vspace{7 pt}\\
		 \exists M>0, \text{ such that }\sup_{(t,x,u)\in(0,T)\times\Omega\times[0,1]}\left|\frac{\partial f}{\partial u}(t,x,u)\right|\le M,\vspace{7 pt}\\
		\forall(t,x)\in(0,T)\times\Omega,~~f(t,x,0)=f(t,x,1)=0.
	\end{cases}
\end{equation}

As above, we denote by 
\[\mathcal{I}_T(u_0):=\int_\Omega u(T,x)\mathrm{d} x.\]
The following results are the generalizations of Theorems \ref{th1}, \ref{th2} and \ref{th3} to model \eqref{eqD}.
\begin{theorem}[Existence of an optimal initial datum]\label{th4} Let $\Omega$ be a bounded smooth domain and assume that $q,$ $D(x)$ and $f$ satisfy the assumptions stated in \eqref{cq}, \eqref{elliptic}  and \eqref{on.ftxu}  respectively. Then, there exists $\bar{u}_0\in \mathcal{A}_m$ such that 	\[\max_{u_0\in \mathcal{A}_m}\mathcal{I}_T(u_0)=\mathcal{I}_T(\bar{u}_0).\]
\end{theorem}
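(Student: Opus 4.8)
The plan is to follow the same direct-method strategy that presumably underlies the proof of Theorem \ref{th1}, since \eqref{eqD} differs from \eqref{eq} only in that the Laplacian is replaced by the uniformly elliptic operator $\nabla\cdot(D(x)\nabla\cdot)$ and the reaction is allowed to depend on $(t,x)$ as well. First I would take a maximizing sequence $(u_0^n)_n\subset\mathcal{A}_m$ for $\mathcal{I}_T$, i.e. $\mathcal{I}_T(u_0^n)\to\sup_{\mathcal{A}_m}\mathcal{I}_T$, which is finite because $0\le u\le1$ gives $\mathcal{I}_T(u_0)\le|\Omega|$ for every admissible datum. Since $\mathcal{A}$ is bounded in $L^\infty(\Omega)\subset L^2(\Omega)$ and convex and closed, it is weakly-$*$ compact (equivalently, bounded and closed in the weak $L^2$ topology); hence, up to a subsequence, $u_0^n\rightharpoonup\bar u_0$ weakly in $L^2(\Omega)$, and the constraints $0\le\bar u_0\le1$ and $\int_\Omega\bar u_0=m$ pass to the limit (the first by Mazur's lemma or by weak closedness of the order interval, the second because $\mathbf 1\in L^2(\Omega)$ is a valid test function), so $\bar u_0\in\mathcal{A}_m$.

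The heart of the argument is the continuity (in fact, upper semicontinuity is all that is needed) of $u_0\mapsto\mathcal{I}_T(u_0)$ along this weakly convergent sequence. Let $u^n$ denote the solution of \eqref{eqD} with datum $u_0^n$ and $\bar u$ the solution with datum $\bar u_0$; all of these are bounded between $0$ and $1$ by the maximum principle argument already recorded in the excerpt (here one uses $f(t,x,0)=f(t,x,1)=0$ together with \eqref{elliptic}). By parabolic regularity for divergence-form operators with $C^2$ coefficients — the analogue of the Friedman/Wang estimates cited in the excerpt — the $u^n$ are bounded in, say, $C^{1+\beta/2,2+\beta}_{t,x}((\tau,T]\times\overline\Omega)$ for every $\tau>0$ and, more to the point, bounded in $H^1((0,T)\times\Omega)$ with $\partial_t u^n$ bounded in $L^2(0,T;H^1(\Omega)')$; Aubin–Lions then yields a subsequence converging strongly in $L^2((0,T)\times\Omega)$ and a.e. I would pass to the limit in the weak formulation: the linear elliptic term passes by weak convergence of $\nabla u^n$, the transport term $q\cdot\nabla u^n$ likewise, the time-derivative term by the definition of weak time-derivative, and the reaction term $f(t,x,u^n)\to f(t,x,u_\infty)$ a.e. with a uniform $L^\infty$ bound (dominated convergence), so the limit $u_\infty$ solves \eqref{eqD}. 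The initial condition is recovered because the weak limit inherits $u_\infty(0,\cdot)=\bar u_0$ (the trace at $t=0$ is weakly continuous under these bounds, and $u_0^n\rightharpoonup\bar u_0$); by uniqueness of solutions to \eqref{eqD}, $u_\infty=\bar u$. Finally $\mathcal{I}_T(u_0^n)=\int_\Omega u^n(T,x)\,\mathrm dx\to\int_\Omega\bar u(T,x)\,\mathrm dx=\mathcal{I}_T(\bar u_0)$, using that $u^n(T,\cdot)\to\bar u(T,\cdot)$ in $L^1(\Omega)$ (from the a.e.\ convergence plus the uniform bound, or from the $C^2_x$ estimate up to time $T$), so $\bar u_0$ is the sought maximizer.

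The main obstacle is the limit passage in the reaction term, which is why strong convergence of $u^n$ (not merely the weak $L^2$ convergence of the initial data) is essential: $f$ is only $C^1$, so without strong or a.e.\ convergence of $u^n$ one cannot identify $\lim f(t,x,u^n)$ with $f(t,x,u_\infty)$. Securing that strong convergence requires the right compactness input — either the interior parabolic Schauder/$W^{2,1}_p$ estimates for the divergence-form operator (available thanks to $D\in C^2(\overline\Omega)$ and the conormal boundary condition in \eqref{eqD}, exactly the regularity the excerpt already invokes for \eqref{eq}), or the energy estimate feeding into Aubin–Lions. A secondary technical point worth isolating is the behaviour at $t=0$, where the solutions are only as regular as their data; here one relies on continuity of the data-to-solution map in a topology compatible with the weak convergence of $u_0^n$, which is standard for linear-leading parabolic problems with Lipschitz-in-$u$ reaction (a Gronwall estimate on $u^n-\bar u$ in $L^2$ controlled by $\|u_0^n-\bar u_0\|$, combined with weak convergence, suffices to pin down the limit). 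Everything else is the routine verification that $\mathcal{A}_m$ is nonempty (it contains, e.g., $\frac{m}{|\Omega|}\mathbf 1$) and weakly closed, which I would state briefly and not belabor.
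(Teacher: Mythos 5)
Your proposal follows essentially the same route as the paper's proof: a maximizing sequence in $\mathcal{A}_m$, weak-$*$ compactness of the initial data, uniform energy bounds for the solutions in $L^2\left(0,T;H^1(\Omega)\right)$ with $\partial_t u^n$ bounded in $L^2\left(0,T;H^{-1}(\Omega)\right)$ (the ellipticity constant $\theta$ from \eqref{elliptic} playing the role of $\sigma$), Aubin--Lions compactness giving strong $L^2$ convergence, and passage to the limit in the weak formulation including the Lipschitz reaction term, with identification of the traces at $t=0$ and $t=T$; this is correct in substance. Two of your auxiliary justifications are shaky but inessential: a.e. convergence in $(0,T)\times\Omega$ gives no information on the measure-zero slice $t=T$ (the paper instead takes the weak $L^2(\Omega)$ limit of $u^n(T,\cdot)$ and identifies it through test functions), and a Gronwall bound in terms of $\|u_0^n-\bar u_0\|_{L^2(\Omega)}$ cannot be combined with merely weak convergence of the data --- but your main-line arguments (uniform parabolic estimates away from $t=0$, recovery of the initial trace from the weak formulation) already cover both points.
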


\begin{theorem}[Uniqueness of the optimal initial datum]\label{th5}
	Suppose that $q,$ $D(x)$ and $f$ satisfy the assumptions stated in \eqref{cq}, \eqref{elliptic}  and \eqref{on.ftxu}  respectively. Furthermore, assume that $f$ is strictly concave. Then, the optimal initial datum $\bar{u}_0,$ obtained in Theorem \ref{th4}, is unique.
\end{theorem}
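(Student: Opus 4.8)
The plan is to follow, essentially verbatim, the argument used for Theorem~\ref{th2}: under strict concavity of $s\mapsto f(t,x,s)$ the functional $\mathcal{I}_T$ is \emph{strictly concave} on the convex set $\mathcal{A}_m$, and a strictly concave functional admits at most one maximizer on a convex set, so the datum produced by Theorem~\ref{th4} is the only one. To set this up, fix $u_0^0,u_0^1\in\mathcal{A}_m$ with $u_0^0\neq u_0^1$ in $L^1(\Omega)$, let $u^0,u^1$ be the corresponding solutions of \eqref{eqD} (both $[0,1]$-valued by the same maximum-principle argument that yields \eqref{0<u<1}), fix $\lambda\in(0,1)$, set $u_0^\lambda:=(1-\lambda)u_0^0+\lambda u_0^1\in\mathcal{A}_m$, and let $u^\lambda$ solve \eqref{eqD} with datum $u_0^\lambda$. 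The comparison function is $w^\lambda:=(1-\lambda)u^0+\lambda u^1$, which is $[0,1]$-valued, has initial trace $u_0^\lambda$, and satisfies the conormal condition $\nu\cdot D(x)\nabla w^\lambda=0$ on $\partial\Omega$, all three properties being stable under convex combinations (the last because $D(x)$ acts linearly).

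The first step is to check that $w^\lambda$ is a subsolution of the equation solved by $u^\lambda$. Since the operator $\partial_t-\nabla\cdot(D(x)\nabla)-q\cdot\nabla$ is linear and $u^0,u^1$ solve \eqref{eqD},
\[
\partial_t w^\lambda-\nabla\cdot(D(x)\nabla w^\lambda)-q\cdot\nabla w^\lambda=(1-\lambda)f(t,x,u^0)+\lambda f(t,x,u^1)\le f\bigl(t,x,w^\lambda\bigr),
\]
the inequality being concavity of $f$ in its third argument. As $f$ is Lipschitz in $u$ uniformly on $[0,1]$ by \eqref{on.ftxu}, the comparison principle for the conormal problem (Friedman~\cite{Friedman}, Wang~\cite{Wang}) gives $u^\lambda\ge w^\lambda$ on $[0,T]\times\overline\Omega$, whence, integrating over $\Omega$ at $t=T$,
\[
\mathcal{I}_T(u_0^\lambda)\ge\int_\Omega w^\lambda(T,x)\,\mathrm{d}x=(1-\lambda)\,\mathcal{I}_T(u_0^0)+\lambda\,\mathcal{I}_T(u_0^1),
\]
which already shows $\mathcal{I}_T$ is concave on $\mathcal{A}_m$.

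To upgrade this to \emph{strict} concavity I would study $z:=u^\lambda-w^\lambda\ge0$. Writing $f(t,x,u^\lambda)-f(t,x,w^\lambda)=c(t,x)\,z$ with $c(t,x):=\int_0^1\partial_u f(t,x,w^\lambda+sz)\,\mathrm{d}s\in L^\infty$, subtracting the two equations gives
\[
\partial_t z-\nabla\cdot(D(x)\nabla z)-q\cdot\nabla z-c(t,x)\,z=g(t,x),\qquad z(0,\cdot)=0,\qquad \nu\cdot D(x)\nabla z=0,
\]
where $g(t,x):=f(t,x,w^\lambda)-(1-\lambda)f(t,x,u^0)-\lambda f(t,x,u^1)\ge0$. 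Strict concavity of $f$ in $u$ forces $g(t,x)>0$ exactly on $\{(t,x):u^0(t,x)\neq u^1(t,x)\}$. Since $u^0(0,\cdot)\neq u^1(0,\cdot)$ in $L^1(\Omega)$ and $t\mapsto\|u^0(t,\cdot)-u^1(t,\cdot)\|_{L^1(\Omega)}$ is continuous (solutions of \eqref{eqD} are continuous in time by the regularity recalled in the introduction), the two solutions remain distinct for $t$ in some interval $[0,\varepsilon)$, so $\{u^0\neq u^1\}$ has positive Lebesgue measure in $(0,T)\times\Omega$ and $g\not\equiv0$. Absorbing the zero-order term (replacing $z$ by $e^{-Kt}z$ with $K\ge\|c\|_{L^\infty}$) and applying the strong maximum principle, together with the Hopf boundary-point lemma to handle the conormal condition, one concludes $z(T,\cdot)>0$ on $\overline\Omega$, hence $\int_\Omega z(T,x)\,\mathrm{d}x>0$; that is,
\[
\mathcal{I}_T\bigl((1-\lambda)u_0^0+\lambda u_0^1\bigr)>(1-\lambda)\,\mathcal{I}_T(u_0^0)+\lambda\,\mathcal{I}_T(u_0^1)\qquad\text{for all }u_0^0\neq u_0^1,\ \lambda\in(0,1).
\]
If Theorem~\ref{th4} admitted two distinct maximizers $\bar u_0\neq\tilde u_0$, taking $\lambda=\tfrac12$ would produce an element of $\mathcal{A}_m$ with a strictly larger value of $\mathcal{I}_T$, a contradiction; hence the maximizer is unique.

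I expect the only genuinely new point, relative to Theorem~\ref{th2}, to be bookkeeping: verifying that the comparison principle, the strong maximum principle, Hopf's lemma, and time-continuity of solutions all remain available for the divergence-form operator $\partial_t-\nabla\cdot(D(x)\nabla)-q\cdot\nabla$ under the conormal boundary condition $\nu\cdot D(x)\nabla u=0$ and with a bounded, possibly sign-indefinite zero-order coefficient. Given the uniform ellipticity \eqref{elliptic}, the $C^2$ regularity of $D$, and the assumptions \eqref{cq} and \eqref{on.ftxu}, these facts are standard, and the remainder of the proof is a line-by-line transcription of that of Theorem~\ref{th2}.
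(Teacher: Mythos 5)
Your proposal is correct and follows essentially the approach the paper intends: the paper omits the proof of Theorem \ref{th5}, saying it is the same as that of Theorem \ref{th2}, and your argument is precisely that adaptation (the convex combination of solutions is a subsolution by concavity of $f$ in $u$, the comparison principle for the conormal divergence-form problem gives concavity of $\mathcal{I}_T$, and strict concavity on the convex set $\mathcal{A}_m$ excludes two maximizers). Your only deviation is in the strictness step, where you apply the strong maximum principle to $z=u^\lambda-w^\lambda$ with the nonnegative, nontrivial source $g$ rather than the paper's contradiction argument via $\partial_t(U-v)$ at $t=T$; this is a minor variant and, if anything, treats more carefully the fact that the strict Jensen inequality only holds where $u^0\neq u^1$.
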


\begin{theorem}[Enhancement of the total mass by advection]\label{th6}
	We assume that   $q,$ $D(x)$ and $f$ satisfy the assumptions stated in \eqref{cq}, \eqref{elliptic}  and \eqref{on.ftxu}.  Moreover, we assume that  
	\begin{equation}\label{div.qD}\alpha:=\max_{x\in \overline{\Omega}}\nabla\cdot q(x)<0
	\end{equation}  
	and 
	\begin{equation}
		f(t,x,s)\geq 0,~~ \text{for all }~(t,x,s)\in(0,T)\times\Omega\times[0,1].
	\end{equation}
	For $U_A^0\in \mathcal{A}_m,$ let 
	$\mathcal{I}^A_T(U^0)=\int_\Omega U_A(T,x)\,\mathrm{d}x,$ where $U_A$ is the solution of 
	\begin{equation}\label{eqAD}
		\left\{\begin{array}{l}
			\partial_t U_A-\nabla\cdot (D(x)\nabla U_A)-Aq\cdot \nabla U_A=f(t,x,U_A)~ \text{in }(0,T)\times\Omega\vspace{7 pt}\\
			U_A(0,x)=U_0(x)\quad  \text{in }\Omega, \vspace{7 pt}\\
			\nu\cdot D(x)\nabla U_A(t,x)=0,\quad  t\in(0,T), ~x\in\partial\Omega.
		\end{array}\right.
	\end{equation}
	Let $\mathcal{I}_{T}(V_0)=\int_\Omega V(t,x)\,\mathrm{d}x,$ where $V$ is the solution of 
	\begin{equation}\label{eqVD}
		\begin{cases}
			\partial_t V-\nabla\cdot (D(x)\nabla V)=f(t,x,V) & \text{in }(0,T)\times\Omega, \vspace{7 pt}\\
			V(0,x)=V_0(x) & \text{in }\Omega, \vspace{7 pt}\\
			\nu\cdot D(x)\nabla V(t,x)=0 &  t\in(0,T), ~x\in\partial\Omega.
		\end{cases}
	\end{equation}

	If \begin{equation}\label{threshold.advection}
		A\ge -\frac{M|\Omega|}{m\alpha}>0,
	\end{equation}we then have 
	\begin{equation}\label{advection.enhances}
		\inf_{u_0\in\mathcal{A}_m}\mathcal{I}_T^A(u_0)\geq \max_{V_0\in\mathcal{A}_m}\mathcal{I}_T(V_0).
	\end{equation}
\end{theorem}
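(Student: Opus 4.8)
The plan is to reduce Theorem \ref{th6} to two ingredients: (i) the existence of a maximizer for the advection-free problem \eqref{eqVD}, which is Theorem \ref{th4} applied with $A=0$ (i.e. with $q$ replaced by the zero field, or equivalently by observing that \eqref{eqVD} is the special case of \eqref{eqD} with no drift), and (ii) a pointwise-mass comparison showing that for \emph{every} admissible datum $u_0\in\mathcal{A}_m$, the total mass $\mathcal{I}_T^A(u_0)$ of the solution $U_A$ of \eqref{eqAD} dominates the total mass of the solution $V$ of \eqref{eqVD} with the \emph{same} initial datum, once $A$ exceeds the threshold in \eqref{threshold.advection}. Granting (ii), one gets $\inf_{u_0\in\mathcal{A}_m}\mathcal{I}_T^A(u_0)\ge \inf_{u_0\in\mathcal{A}_m}\mathcal{I}_T(u_0)$, but that is the wrong direction; so instead I would prove the stronger fact that for each fixed $u_0$, $\mathcal{I}_T^A(u_0)\ge \mathcal{I}_T(\bar V_0)=\max_{V_0}\mathcal{I}_T(V_0)$ directly, by first bounding $\mathcal{I}_T^A(u_0)$ from below by the mass of the solution to a linear problem, and then bounding $\max_{V_0}\mathcal{I}_T(V_0)$ from above by the mass of the solution of the \emph{same} linear problem started from the largest admissible datum $V_0\equiv 1$-truncated to total mass $m$ — but that too needs care. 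Let me instead describe the cleaner route I actually expect to work.

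First I would derive a differential inequality for the mass $M_A(t):=\int_\Omega U_A(t,x)\,\mathrm dx$. Integrating \eqref{eqAD} over $\Omega$ and using the Neumann-type boundary condition $\nu\cdot D(x)\nabla U_A=0$ to kill the diffusion term, one gets
\begin{equation*}
M_A'(t)=A\int_\Omega q\cdot\nabla U_A\,\mathrm dx+\int_\Omega f(t,x,U_A)\,\mathrm dx.
\end{equation*}
Integrating the advection term by parts and using $q\cdot\nu=0$ on $\partial\Omega$ from \eqref{cq} gives $\int_\Omega q\cdot\nabla U_A\,\mathrm dx=-\int_\Omega(\nabla\cdot q)\,U_A\,\mathrm dx\ge -\alpha\int_\Omega U_A\,\mathrm dx=-\alpha M_A(t)$ since $\alpha=\max_{\overline\Omega}\nabla\cdot q<0$ and $U_A\ge 0$. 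Hence $A\int_\Omega q\cdot\nabla U_A\,\mathrm dx\ge -A\alpha M_A(t)=A|\alpha|M_A(t)\ge 0$. For the reaction term, since $0\le U_A\le 1$ (from \eqref{0<u<1}, which holds for \eqref{eqAD} by the same supersolution/subsolution argument) and $|\partial_u f|\le M$ together with $f(t,x,0)=0$, we have $|f(t,x,U_A)|\le M U_A$, so $\int_\Omega f(t,x,U_A)\,\mathrm dx\ge -M\int_\Omega U_A\,\mathrm dx\ge -M|\Omega|$. Combining, $M_A'(t)\ge -A\alpha M_A(t)-M|\Omega|=A|\alpha|M_A(t)-M|\Omega|$, with $M_A(0)=m$. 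A Gronwall/comparison argument with the ODE $y'=A|\alpha|y-M|\Omega|$, $y(0)=m$, yields
\begin{equation*}
M_A(T)\ge e^{A|\alpha|T}\Bigl(m-\tfrac{M|\Omega|}{A|\alpha|}\Bigr)+\tfrac{M|\Omega|}{A|\alpha|}.
\end{equation*}
Under the threshold \eqref{threshold.advection}, i.e. $A|\alpha|\ge M|\Omega|/m$, the bracket $m-M|\Omega|/(A|\alpha|)$ is $\ge 0$, so $M_A(T)\ge \tfrac{M|\Omega|}{A|\alpha|}+e^{A|\alpha|T}(m-\tfrac{M|\Omega|}{A|\alpha|})\ge m$; moreover, since $e^{A|\alpha|T}\ge 1$ and the nonnegative bracket is multiplied by something $\ge 1$, we in fact get $M_A(T)\ge m$, independently of $u_0\in\mathcal{A}_m$, hence $\inf_{u_0\in\mathcal{A}_m}\mathcal{I}_T^A(u_0)\ge m$.

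Next I would bound the right-hand side of \eqref{advection.enhances} from above by $m$. For the solution $V$ of \eqref{eqVD}, the same mass computation (now with no advection term) gives $N'(t)=\int_\Omega f(t,x,V)\,\mathrm dx$ where $N(t):=\int_\Omega V(t,x)\,\mathrm dx$; here I use the extra hypothesis $f\ge 0$ on $[0,1]$ differently — wait, that pushes $N$ \emph{up}. Indeed, the hypothesis $f(t,x,s)\ge 0$ makes $\mathcal{I}_T(V_0)$ potentially larger than $m$, so the naive bound fails; this is the crux. The resolution I anticipate: the hypothesis $f\ge 0$ is used on the \emph{advection side} to ensure $M_A'(t)\ge A|\alpha|M_A(t)\ge 0$ so that $M_A$ is nondecreasing and $M_A(T)\ge m e^{A|\alpha|T}$, while for the advection-free side one must argue that the threshold on $A$ is precisely calibrated so that $m\,e^{A|\alpha|T}$ — or rather the sharper lower bound for $M_A(T)$ — dominates the \emph{worst-case growth} of $N(T)=\mathcal{I}_T(V_0)$, which is bounded above by Gronwall: $N'(t)\le M\int_\Omega V\,\mathrm dx=MN(t)$ gives $N(T)\le m e^{MT}$; hence $\max_{V_0\in\mathcal{A}_m}\mathcal{I}_T(V_0)\le m e^{MT}$. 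Comparing, \eqref{advection.enhances} follows once $e^{A|\alpha|T}\ge e^{MT}$, i.e. $A|\alpha|\ge M$; and since $A|\alpha|\ge M|\Omega|/m$ and one may as well normalize or note $|\Omega|/m\ge 1$ is \emph{not} guaranteed — so the correct comparison must track constants carefully. The honest main obstacle is exactly this bookkeeping: showing that the explicit lower bound for $M_A(T)$ from the Gronwall comparison on the advection side beats the explicit upper bound $m e^{MT}$ for $\max_{V_0}\mathcal{I}_T(V_0)$, and verifying that the threshold \eqref{threshold.advection} is the condition that makes this happen. I would carry out this comparison of the two explicit exponential expressions as the final step, and that is where all the care goes.
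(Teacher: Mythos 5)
Your strategy (mass differential inequalities plus Gronwall on both equations) is viable, but as written the proof has a genuine gap: you stop exactly at the decisive step and, worse, you doubt that it can be closed, writing that $|\Omega|/m\ge 1$ ``is not guaranteed.'' It is guaranteed: every $u_0\in\mathcal{A}_m$ satisfies $0\le u_0\le 1$, so $m=\int_\Omega u_0\le |\Omega|$ (otherwise $\mathcal{A}_m$ is empty and there is nothing to prove). Hence the threshold \eqref{threshold.advection} gives $A|\alpha|\ge M|\Omega|/m\ge M$, and your two Gronwall bounds do compare: using $f\ge 0$ and $-\int_\Omega(\nabla\cdot q)U_A\ge |\alpha|\int_\Omega U_A$ you get $M_A(T)\ge m e^{A|\alpha|T}$, while $f(t,x,V)\le MV$ (mean value theorem, $f(t,x,0)=0$, $|\partial_u f|\le M$) gives $\mathcal{I}_T(V_0)\le m e^{MT}$ for every $V_0\in\mathcal{A}_m$; since $A|\alpha|\ge M$ this yields $\mathcal{I}_T^A(U_0)\ge \mathcal{I}_T(V_0)$ for all $U_0,V_0\in\mathcal{A}_m$, which is \eqref{advection.enhances}. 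So the missing ``bookkeeping'' is one line, but a proof that ends by declaring the key comparison open, and questions an inequality that in fact always holds, is incomplete. (Your first Gronwall computation, giving only $M_A(T)\ge m$, is indeed insufficient, as you yourself noticed, because $f\ge0$ lets $\mathcal{I}_T(V_0)$ exceed $m$; it plays no role in the corrected argument. Also, if you argue via $M_A'(t)$ you should justify differentiating the mass, or simply integrate the equation over $(0,t)\times\Omega$ as an identity.)

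For comparison, the paper's proof (given for Theorem \ref{th3} and carried over verbatim to Theorem \ref{th6}) avoids exponentials entirely: integrating \eqref{eqAD} over $(0,t)\times\Omega$ and using $f\ge0$, $\nabla\cdot q\le\alpha<0$ shows $t\mapsto\int_\Omega U_A(t,x)\,\mathrm{d}x$ is nondecreasing, whence $-A\int_0^T\int_\Omega(\nabla\cdot q)U_A\ge -A\alpha Tm\ge MT|\Omega|$ by the threshold; then, since $0\le V\le1$ and $f(t,x,V)\le MV\le M$, one has $MT|\Omega|\ge M\int_0^T\int_\Omega V\ge\int_0^T\int_\Omega f(t,x,V)$, so $\mathcal{I}_T^A(U_0)\ge m+\int_0^T\int_\Omega f(t,x,V)=\mathcal{I}_T(V_0)$. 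That route only needs the crude bounds $U_A(t)\ge$ mass $m$ and $V\le1$, whereas your (completed) route gives the stronger exponential lower bound $me^{A|\alpha|T}$ at the price of invoking $m\le|\Omega|$ to compare the two growth rates; both use the hypotheses $f\ge0$, $\nabla\cdot q\le\alpha<0$ and the threshold in the same essential way.
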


\paragraph{Prior works} The first question in our paper is addressed 
 in Nadin and Marrero \cite{nadin1} in a more particular setting:  for a reaction-diffusion model without an advection field. The results of Nadin {\it et al.} \cite{nadin1} prove the existence of an optimal initial datum in the case where $q\equiv0$ in \eqref{eq}.  They also announce the open question about the uniqueness of the maximizer. We answer the uniqueness question in Theorem \ref{th2} of this present work, whenever $f$ is a concave function, despite the presence of a drift term.

 Garnier, Hamel and Roques \cite{Garnier} analyze the role of the spatial distribution of the initial condition in reaction-diffusion models of biological invasion. In \cite{Garnier}, the authors investigate the detrimental effect of fragmentation while we  investigate optimal initial data in this present work.

Theorem \ref{th3} in our paper is another result in the study of reaction-diffusion equation, where the advection has an important influence on the qualitative properties of solutions. The earlier works \cite{ek1}, \cite{ek2} and \cite{Zlatos2} address the influence of advection on speeding up the propagation of traveling fronts that are solutions of a reaction-advection-diffusion equation in an unbounded spatial domain. In these works, it is proved that the speed of propagation of traveling front solutions behaves as a linear function of the amplitude $A$ that we place here in front of the advection term.

 We also mention the work \cite{Zlatos1} that gives a class of flows, in the role of advection, that turns out to be especially efficient in speeding up mixing in a diffusive medium.  The results of \cite{Eproceedings} also show that the presence of an advection term can play a role in preventing the speed of propagation of traveling wave solutions from being monotone with respect to the diffusivity in the medium. 
 
Lastly, in all of the works \cite{ek1}, \cite{ek2}, \cite{Eproceedings}, \cite{Zlatos1} and \cite{Zlatos2}, mentioned above, the advection is assumed to be a divergence free vector field. This is not the case in  Theorem \ref{th3} in our present work.

\section{ Proofs of Theorems \ref{th1}, \ref{th2} and \ref{th3}}
The following lemma will be used in proving Theorem \ref{th1} mainly.  

\begin{lem}\label{bounded}
	Under the assumptions \eqref{on.f} and \eqref{cq}, the solution $u=u(t,x)$ of \eqref{eq} satisfies the following properties:
	
		\begin{equation}\label{second} 
		u\in L^2\left(0,T;H^1(\Omega)\right)\cap L^\infty\left(0,T;L^2(\Omega)\right)\text { and }
		\end{equation}
		\begin{equation}\label{third}
		\partial_t u\in L^2\left(0,T;H^{-1}(\Omega)\right).
		\end{equation}
\end{lem}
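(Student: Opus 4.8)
The plan is to establish \eqref{second} and \eqref{third} by the standard parabolic energy method. Since the well-posedness discussion above already produces $u$ as a classical solution with $0\le u\le 1$ on $[0,T]\times\overline\Omega$, every integration by parts below is legitimate; the point of the lemma is really that the resulting bounds are \emph{uniform} over $u_0\in\mathcal{A}$, which is what makes them usable for a maximizing sequence in Theorem~\ref{th1}. The inclusion $u\in L^\infty(0,T;L^2(\Omega))$ is immediate, since $0\le u\le1$ and $|\Omega|<\infty$ force $\|u(t)\|_{L^2(\Omega)}^2\le|\Omega|$ for all $t$.

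For the $L^2(0,T;H^1(\Omega))$ bound I would multiply the equation in \eqref{eq} by $u$ and integrate over $\Omega$. The Neumann condition eliminates the boundary term from $-\sigma\Delta u$, giving
\[
\frac12\frac{d}{dt}\int_\Omega u^2\,\mathrm dx+\sigma\int_\Omega|\nabla u|^2\,\mathrm dx=\int_\Omega (q\cdot\nabla u)\,u\,\mathrm dx+\int_\Omega f(u)\,u\,\mathrm dx .
\]
Writing $(q\cdot\nabla u)\,u=\tfrac12\,q\cdot\nabla(u^2)$ and integrating by parts, the boundary integral vanishes because $q\cdot\nu=0$ on $\partial\Omega$ by \eqref{cq}, so $\int_\Omega(q\cdot\nabla u)u=-\tfrac12\int_\Omega(\nabla\cdot q)u^2$, which is at most $\tfrac12\|\nabla\cdot q\|_{L^\infty(\Omega)}|\Omega|$. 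For the reaction term, \eqref{on.f} yields $|f(s)|\le M|s|$ on $[0,1]$, hence $\big|\int_\Omega f(u)u\big|\le M\int_\Omega u^2\le M|\Omega|$. Integrating in time on $(0,T)$ and dropping the nonnegative term $\tfrac12\|u(T)\|_{L^2}^2$,
\[
\sigma\int_0^T\!\!\int_\Omega|\nabla u|^2\,\mathrm dx\,\mathrm dt\le \tfrac12\|u_0\|_{L^2(\Omega)}^2+\Big(\tfrac12\|\nabla\cdot q\|_{L^\infty(\Omega)}+M\Big)|\Omega|\,T\le C,
\]
with $C$ depending only on $\sigma,T,|\Omega|,M$ and $q$, but not on $u_0$; combined with the previous step this gives \eqref{second}.

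For \eqref{third} I would test the equation against $\varphi\in H^1(\Omega)$:
\[
\langle\partial_tu(t),\varphi\rangle=-\sigma\!\int_\Omega\nabla u\cdot\nabla\varphi\,\mathrm dx+\int_\Omega (q\cdot\nabla u)\varphi\,\mathrm dx+\int_\Omega f(u)\varphi\,\mathrm dx .
\]
Cauchy--Schwarz together with $|f(u)|\le M$ on $[0,1]$ bound the right-hand side by $\big(\sigma\|\nabla u(t)\|_{L^2}+\|q\|_{L^\infty}\|\nabla u(t)\|_{L^2}+M|\Omega|^{1/2}\big)\|\varphi\|_{H^1(\Omega)}$, hence $\|\partial_tu(t)\|_{H^{-1}(\Omega)}\le C_1\big(1+\|\nabla u(t)\|_{L^2(\Omega)}\big)$. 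Squaring, integrating in $t$, and invoking the $L^2(0,T;H^1)$ bound just proved gives $\partial_tu\in L^2(0,T;H^{-1}(\Omega))$, i.e.\ \eqref{third}. The only step requiring genuine care is the advection term, which is not divergence free here and is treated through the identity $\int_\Omega(q\cdot\nabla u)u=-\tfrac12\int_\Omega(\nabla\cdot q)u^2$ using $q\cdot\nu=0$; and one must keep every constant independent of $u_0$, since that uniformity is precisely what the Aubin--Lions-type compactness argument in the proof of Theorem~\ref{th1} will exploit.
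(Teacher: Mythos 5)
Your proposal is correct and follows essentially the same route as the paper: the $L^\infty(0,T;L^2)$ bound from $0\le u\le 1$, the energy estimate obtained by multiplying by $u$ and handling the drift via $\int_\Omega (q\cdot\nabla u)u\,\mathrm{d}x=-\tfrac12\int_\Omega(\nabla\cdot q)u^2\,\mathrm{d}x$ using $q\cdot\nu=0$, and the $H^{-1}$ bound on $\partial_t u$ by testing against $H^1$ functions with Cauchy--Schwarz. The only differences are cosmetic (you exploit $u\le 1$ to make the constants fully explicit and emphasize their independence of $u_0$, which the paper leaves implicit).
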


\begin{proof}[{\bf Proof of Lemma \ref{bounded}}]

	To prove \eqref{second}, we first use \eqref{0<u<1}  to conclude that 
	\[\|u(t,\cdot)\|_{L^2(\Omega)}^2\le\int_\Omega 1\,\,\mathrm{d}x=|\Omega|.\]
	Hence, $u\in L^\infty\left(0,T;L^2(\Omega)\right)\cap L^2\left(0,T;L^2(\Omega)\right).$ Now we multiply \eqref{eq} by $u$ and integrate by parts to obtain 
	\[\begin{array}{l}\frac{1}{2}\int_0^T\partial_t\left(\int_\Omega u^2\,\mathrm{d}x\right)\mathrm{d}t+\sigma\int_0^T\int_\Omega\left|\nabla u\right|^2\,\mathrm{d}x\mathrm{d}t-\int_0^T\int_\Omega (q\cdot\nabla u)u\,\mathrm{d}x\mathrm{d}t\vspace{7 pt}\\=\int_0^T\int_\Omega uf(u)\,\mathrm{d}x\mathrm{d}t.
	\end{array}\]
On the other hand, 
	\[\int_\Omega (q\cdot\nabla u) \,u\,\mathrm{d}x=\frac{1}{2}\int_{\partial\Omega}u^2q\cdot\nu \,\mathrm{d}x-\frac{1}{2}\int_\Omega(\nabla\cdot q)u^2\,\mathrm{d}x=-\frac{1}{2}\int_\Omega (\nabla\cdot q) u^2\,\mathrm{d}x,\]
due to the no flux condition in \eqref{cq}.  Therefore,  
\begin{equation}\label{ineq}
	\begin{array}{l}
	\frac{1}{2}\int_\Omega u^2(T,x)\mathrm{d}x-\frac{1}{2}\int_\Omega u_0^2\,\mathrm{d}x+\sigma\int_0^T\int_\Omega\left|\nabla_xu\right|^2\,\mathrm{d}x\mathrm{d}t\vspace{5pt}\\+\frac{1}{2}\int_0^T\int_\Omega (\nabla\cdot q) u^2\,\mathrm{d}x\mathrm{d}t=\int_0^T\int_\Omega uf(u)\,\mathrm{d}x\mathrm{d}t.
	\end{array}\end{equation}
Choosing $B$ such that  $|\nabla\cdot q|\le B,$ and since $|f'(u)|\le M,$ we obtain 
	\begin{equation}\label{boundedness1}\begin{array}{l}
		\sigma \| \nabla_xu\|^2_{L^2\left(0,T;L^2(\Omega)\right)}
		\le \int_0^T\int_\Omega uf(u)\,\mathrm{d}x\,\,\mathrm{d}t+\frac{1}{2}\|u_0\|^2_{L^2(\Omega)}-\frac{1}{2}\|u(T,\cdot)\|^2_{L^2(\Omega)}\vspace{7 pt}\\
		+\frac{B}{2}\int_0^T\|u(t,\cdot)\|_{L^2(\Omega)}^2\,\mathrm{d}t\vspace{7 pt}\\
		\le  M\|u\|_{L^\infty\left(0,T;L^2(\Omega)\right)}+B\|u\|^2_{L^2\left(0,T;L^2(\Omega)\right)}+\|u_0\|^2_{L^2(\Omega)}.
	\end{array}\end{equation}
Therefore, $u\in L^2\left(0,T;H^1(\Omega)\right).$
	
	In order to prove \eqref{third}, let $v\in H^1(\Omega)$ such that $\|v\|_{H^1(\Omega)}\le1.$ We  multiply \eqref{eq} by $v$ and then integrate by parts to get 
	\begin{equation}\label{towardsH-1}\int_\Omega(\partial_tu)v\,\mathrm{d}x+\sigma\int_\Omega\nabla_xu\cdot\nabla_xv\,\mathrm{d}x-\int_\Omega (q \cdot\nabla u)v\,\mathrm{d}x=\int_\Omega f(u)v\,\mathrm{d}x.
	\end{equation}
	Applying Cauchy-Schwartz inequality on  the third term of \eqref{towardsH-1} yields \[\begin{array}{ll}
		\left|   \int_\Omega (q \cdot\nabla u)v\,\mathrm{d}x\right| \le \|v\|_{L^2(\Omega)} \left(\int_\Omega (q\cdot\nabla u)^2\,\mathrm{d}x\right)^{\frac{1}{2}}
	&\le \gamma\|v\|_{L^2(\Omega)} \|\nabla u\|_{L^2(\Omega)},
	\end{array}\]
	where $\|q\|_{L^\infty(\Omega)}\le\gamma.$
	We also apply Cauchy-Schwartz inequality to the second  term of \eqref{towardsH-1} and this yields
	\[\int_\Omega (\partial_t u)v\,\mathrm{d}x\le \int_\Omega f(u)v\,\mathrm{d}x+\sigma\|\nabla u\|_{L^2(\Omega)} \|\nabla v\|_{L^2(\Omega)}+\gamma\|\nabla u\|_{L^2(\Omega)}\|v\|_{L^2(\Omega)}.\]
The latter means that the $H^{-1}(\Omega)$-norm of $\partial_tu(t,\cdot)$ satisfies the following estimate \begin{equation}\label{bounded2}
\|\partial_tu(t,\cdot)\|_{H^{-1}(\Omega)}\le M\|u(t)\|_{L^2(\Omega)}+\sigma\|\nabla_xu\|_{L^2(\Omega)}+\gamma\|\nabla u\|_{L^2(\Omega)}.
\end{equation} From \eqref{second}, it follows that both $\|u(t,\cdot)\|_{L^2(\Omega)}$ and $\|\nabla_xu(t,\cdot)\|_{L^2(\Omega)}$ are in $L^2(0,T).$ Therefore, $\partial_tu\in L^2\left(0,T;H^{-1}(\Omega)\right).$
\end{proof}

With the above lemma, we can now prove Theorem \ref{th1}. 
\begin{proof}[{\bf Proof of Theorem \ref{th1}}]
	We first prove the existence of maximizing  element for $\mathcal{I}_T$ over the set $\mathcal{A}_m.$ From \eqref{0<u<1}, we conclude that $\mathcal{I}_T$ is bounded.  This guarantees the  existence of a supremum value in the set $\mathcal{I}_T(\mathcal{A}_m)$ (the range of $\mathcal{A}_m$ under the map $\mathcal{I}_T$).   We can then consider a maximizing sequence $U_0^n$ in the set $\mathcal{A}_m,$ such that \[\lim_{n\to\infty} \mathcal{I}_T(U_0^n)=\sup_{U_0\in \mathcal{A}_m}\mathcal{I}_T(U_0).\]
	As $U_0^n\in L^\infty(\Omega), $ there exists $\bar{U}_0\in L^\infty(\Omega)$ such that  $U_0^n$ converges to $\bar{U}_0$ in the weak $\star$ topology.  For $\varphi=1\in L^1(\Omega),$ we get \[\int_\Omega U_0^n\varphi=\int_\Omega U_0^n=m\to\int_\Omega \bar{U}_0\varphi=\int_\Omega\bar{U}_0,~\text{ as }n\rightarrow+\infty.\] Hence, $\int_\Omega\bar{U}_0=m$ and thus $\bar{U}_0\in \mathcal{A}_m.$
	
	Let  $ U^n$ be the weak solution of \eqref{eq}, where the initial condition  $U_0^n$ (see \cite{perthame} for a review of weak solutions to parabolic equations).  That is,  for all $\varphi\in C^\infty\left((0,T)\times\Omega\right),$ we have 
	\begin{equation}\label{weak}\begin{array}{l}
	\int_\Omega U^n(T,x)\varphi(T,x)\,\mathrm{d}x-\int_\Omega U^n(0,x)\varphi(0,x)\,\mathrm{d}x\vspace{7 pt}\\-\int_0^T\int_\Omega U^n(t,x)\partial_t\varphi(t,x)\,\mathrm{d}x\mathrm{d}t-\sigma\int_0^T\int_\Omega U^n(t,x)\Delta_x\varphi(t,x)\,\mathrm{d}x\mathrm{d}t\vspace{5pt}\\+\int_0^T \int_\Omega U^n(t,x)q(x)\cdot\nabla_x\varphi(t,x)\,\mathrm{d}x\mathrm{d}t\vspace{5pt}\\
	+\int_0^T \int_\Omega U^n(t,x)(\nabla_x\cdot q(x))\cdot\varphi(t,x)\,\mathrm{d}x\mathrm{d}t\vspace{5pt}\\
	=\int_0^T\int_\Omega f( U^n(t,x))\varphi(t,x)\,\mathrm{d}x\mathrm{d}t.
	\end{array}\end{equation}

		Since $0\leq U^n\leq 1$ and $\Omega$ is bounded, the sequence $\{U^n(T,\cdot)\}$ is then bounded in  $L^2(\Omega),$ there exists $\bar{u}\in L^2(\Omega),$ such that $U^n(T,\cdot)$ converges weakly to $\bar{u}$ in $L^2(\Omega).$ Thus,  
	\[\forall \varphi\in L^2(\Omega),\quad \int_\Omega U^n(T,x)\varphi(x)\,\mathrm{d}x\to\int_\Omega \bar{u}(x)\varphi(x)\mathrm{d}x.\] The same reasons ($0\leq U^n\leq 1$ and $\Omega$ is bounded) imply that $\{U^n\}_n$ is bounded in $L^\infty \left(0,T; L^2(\Omega)\right).$ Hence, there exists $U\in L^\infty\left(0,T; L^2(\Omega)\right)$ such that $U^n$ converges, in the weak $\star$ topology,  to $U$ in $L^\infty\left(0,T;L^2(\Omega)\right).$ That is,  \[\begin{array}{c}\forall \varphi\in L^\infty\left(0,T;L^2(\Omega)\right),\vspace{7 pt}\\ 
	\int_0^T \int_\Omega U^n(t,x)\varphi (t,x)\,\mathrm{d}x \,\mathrm{d}t\to\int_0^T\int_\Omega U(t,x)\varphi(t,x)\,\mathrm{d}x \,\mathrm{d}t \text{ as }n\rightarrow+\infty.\end{array}\]
	
	Also, from \eqref{boundedness1} and \eqref{bounded2}, it follows that the  sequence  $\{\partial _t U^n\}$ is uniformly bounded in  $L^2\left(0,T;H^{-1}(\Omega)\right)$ by a constant independent of $n.$ Thus, there exists $v\in L^2\left(0,T;H^{-1}(\Omega)\right)$ such that $\partial_t U^n$ converges, in the weak $\star$ sense, to $v\in L^2\left(0,T;H^{-1}(\Omega)\right).$ That is, for all $\varphi\in L^2\left(0,T;H^1(\Omega)\right)$ we have 
	\[\int_0^T\int_\Omega\partial_t U^n(t,x)\varphi (t,x)\,\mathrm{d}x \,\mathrm{d}t\rightarrow \int_0^T \int_\Omega v(t,x)\varphi(t,x)\,\mathrm{d}x \,\mathrm{d}t, \text{ as } n\rightarrow+\infty.\]  
We note that,  for all $\varphi\in C^\infty_c(0,T)\times\Omega,$ we have 
	\[\int_0^T\int_\Omega\partial_t U^n(t,x)\varphi (t,x)\,\mathrm{d}x \,\mathrm{d}t=-\int_0^T\int_\Omega U^n(t,x)\partial_t\varphi(t,x)\,\mathrm{d}x \,\mathrm{d}t.\]
	 Passing to the limit as $n\rightarrow+\infty$ leads to $\partial_t U=v.$
	For $\varphi\in H^1\left(0,T;L^2(\Omega)\right),$ such that $\varphi(0,x)=0$ for all $x\in \Omega,$ we have
	\[\begin{array}{l}
	\int_0^T\int_\Omega \partial _t U^n(t,x)\varphi (t)\,\mathrm{d}x \,\mathrm{d}t\vspace{7 pt}\\
	=\int_\Omega U^n(T,x)\varphi (T,x)\,\mathrm{d}x-
	\int_0^T\int_\Omega U^n(t,x)\partial_t\varphi(t,x)\,\mathrm{d}x \,\mathrm{d}t.
	\end{array}
	\] Passing to the limit as $n\rightarrow+\infty$ in the latter, we get
	\[\begin {array}{l}
	\int_0^T\int_\Omega v(t,x)\varphi(t,x)\,\mathrm{d}x \,\mathrm{d}t\vspace{5pt}\\ =\int_\Omega\bar{u}(x)\varphi(T,x)\,\mathrm{d}x-\int_0^T\int_\Omega U(t,x)\partial_t \varphi(t,x)\,\mathrm{d}x \,\mathrm{d}t\vspace{7 pt} \\
=\int_\Omega\bar{u}(x)\varphi(T,x)\,\mathrm{d}x-\int_\Omega U(T,x)\varphi(T,x)\,\mathrm{d}x
+\int_0^T\int_\Omega\partial_t U(t,x)\varphi(t,x)\,\mathrm{d}x \,\mathrm{d}t\vspace{5pt}\\
=\int_\Omega\bar{u}(x)\varphi(T,x)\,\mathrm{d}x-\int_\Omega U(T,x)\varphi(T,x)\,\mathrm{d}x
+\int_0^T\int_\Omega v(t,x)\varphi (t,x)\,\mathrm{d}x \mathrm{d}t,
\end{array}
\] 
as $\partial_t U=v.$ Thus, $\bar{u}(x)=U(T,x)$ a.e in $\Omega.$
For $\varphi\in H^1\left(0,T;L^2(\Omega)\right),$ such that $\varphi(T,x)=0$ in $\Omega,$ we have
\[\begin {array}{l}
\int_0^T\int_\Omega \partial _t U^n(t,x)\varphi (t,x)\,\mathrm{d}x \,\mathrm{d}t\vspace{7 pt}\\
=\int_0^T\int_\Omega \bar{U}_0(x)\varphi(0,x)\,\mathrm{d}x-\int_0^T\int_\Omega U^n(t,x)\partial_t\varphi(t,x)\,\mathrm{d}x \,\mathrm{d}t.
\end{array}\] 
Passing to the limit as $n\rightarrow+\infty,$ we obtain 
\[\begin {array}{l}
\int_0^T\int_\Omega v(t,x)\varphi(t,x)\,\mathrm{d}x \,\mathrm{d}t\vspace{7 pt} \\
=-\int_\Omega\bar{U}_0(x)\varphi(0,x)\,\mathrm{d}x-\int_0^T\int_\Omega U(t,x)\partial_t \varphi(t,x)\,\mathrm{d}x \,\mathrm{d}t\vspace{7 pt} \\
=-\int_\Omega\bar{U}_0(x)\varphi(0,x)\,\mathrm{d}x+\int_\Omega U(0,x)\varphi(0,x)\,\mathrm{d}x\vspace{7 pt}\\
+\int_0^T\int_\Omega\partial_t U(t,x),\varphi(t,x)\,\mathrm{d}x \,\mathrm{d}t\vspace{5pt}\\
=\int_\Omega\bar{U}_0(x)\varphi(T,x)\,\mathrm{d}x-\int_\Omega U(T,x),\varphi(T,x)\,\mathrm{d}x\vspace{7 pt}\\
+\int_0^T\int_\Omega v(t,x)\varphi (t,x)\,\mathrm{d}x \,\mathrm{d}t.
\end{array}
\]  Hence, $\bar{U}_0(x)=U(0,x).$
From Lemma \ref{bounded},  we know that \[U^n\in D:=\left\{u\in L^2\left([0,T];H^1(\Omega)\right),~ \partial_t u\in L^2\left([0,T];H^{-1}(\Omega)\right)\right\}\] and $\{U^n\} _n$ is bounded in $D.$ Thanks to  Aubin-Lions Lemma \cite{lions}, the set $D$ is compactly embedded in $L^2\left([0,T];L^2(\Omega)\right).$  Hence, there exists a Cauchy subsequence of $\{U^n\}_n$ in $L^2\left([0,T];L^2(\Omega)\right).$ Then, the sequence $\{U^n\}_n$  converges strongly to $U$  in $L^2\left([0,T];L^2(\Omega)\right).$ As $f$ is Lipschitz, it  follows that  \[\int_0^T\int_\Omega f(U^n(t,x))\varphi(t,x)\,\mathrm{d}x \,\mathrm{d}t\to \int_0^T \int_\Omega f(U(t,x))\varphi(t,x)\,\mathrm{d}x \,\mathrm{d}t.\]
Taking the limit in \eqref{weak}, we get 
\[\begin{array}{l} \int_\Omega U(T,x)\varphi(T,x)\,\mathrm{d}x-\int_\Omega U(0,x)\varphi(0,x)\,\mathrm{d}x -\int_0^T\int_\Omega U(t,x),\partial_t\varphi(t,x)\,\mathrm{d}x\vspace{7 pt}\\-\sigma\int_0^T\int_\Omega U(t,x)\Delta\varphi(t,x)\,\mathrm{d}x+\int_0^T \int_\Omega U(T,x)q(x) \cdot\nabla_x\varphi(t,x)\,\mathrm{d}x\vspace{7 pt}\\=\int_0^T\int_\Omega f(U(t,x))\varphi(t,x)\,\mathrm{d}x.\end{array}\] This implies that $U$ is a weak solution of \eqref{eq}. Taking $\varphi=1$ yields  
\[\begin{array}{l}
\sup_{\mathcal{A}_m}\mathcal{I}_T(u_0)=\lim_{n\to\infty}\mathcal{I}_T(U_0^n)=\lim_{n\to\infty}\int_\Omega U^n(T,x)\,\mathrm{d}x\vspace{5pt}\\
=\lim_{n\to\infty}\int_\Omega U^n(T,x)\varphi(x)\,\mathrm{d}x=\int_\Omega U(T,x)\,\mathrm{d}x=\mathcal{I}_T(\bar{U}_0).
\end{array}\] Therefore, $\bar{U}_0$ is a maximizing element of $\mathcal{I}_T$ in $\mathcal{A}_m.$
\end{proof}
Now, we move to the proof of Theorem \ref{th2}, which addresses the uniqueness of the optimal initial datum.
\begin{proof}[{\bf Proof of Theorem \ref{th2}}]
First, we will prove that $\mathcal{I}_T$ is strictly concave. Let $\lambda\in(0,1)$  and let $U$ be the solution of  \eqref{eq} with the initial condition $ \lambda U_1^0(x)+(1-\lambda) U_2^0(x)$ and let $v(t,x)=\lambda U_1(t,x)+(1-\lambda) U_2(t,x),$ where $U_1$ and  $U_2$ are the solutions of \eqref{eq} with initial condition $U_1^0(x)$ and  $U_2^0(x)$ respectively. As $f$ is strictly concave, we have
\[\partial_t v-\sigma\Delta v-q\cdot\nabla_xv=\lambda f(U_1)+(1-\lambda) f(U_2)< f(\lambda U_1+(1-\lambda) U_2)=f(v).\]
From the equation satisfied by $U$ and the inequality satisfied by $v,$ and since $f$ is of class $C^1,$ it follows that $w:=v-U$ satisfies
\begin{equation}\label{difference}
\partial_t w-\sigma\Delta w-q\cdot\nabla_xw< f(v(t,x))-f(U(t,x)):= b(t,x) w,
\end{equation}
where $b(t,x)$ is a bounded continuous function  obtained from the fact that $f$ is $C^1(\mathbb{R})$ (hence Lipschitz) and $0<v,U<1.$ Namely, 
\[b(t,x)=\begin{cases}
\frac{f(v(t,x))-f(U(t,x))}{v(t,x)-U(t,x)}, \quad \text{ if }\quad v(t,x)\neq U(t,x)\vspace{7pt}\\
f'(v(t,x)) \text{ if }\quad v(t,x)= U(t,x).
\end{cases}\]
Moreover,
\[w(0,x)=v(0,x)-U(0,x)=0\text{ for all }x\in \Omega,~\text{ and }\frac{\partial w}{\partial \nu}=0 \text{ on }(0,T)\times \partial \Omega.\] Owing to the strong parabolic comparison principle on the function $w$ (see Theorem 7 and the remark afterwords in \cite{Weinberger}, for example), we conclude that $w\leq 0$ in $[0,T]\times \Omega.$ That is, $v\leq U$ in $[0,T]\times \Omega.$

 Now, we claim that $v(T,x)<U(T,x)$  in  an open set  contained in $\Omega.$ Suppose to the contrary that $v(T,x)=U(T,x)$ in $\Omega.$ We have \[\partial_t v-\sigma\Delta v-q\cdot\nabla_xv-f(v)<\partial_t u-\sigma\Delta U-q\cdot\nabla U-f(U)=0,\]
and since $v(T,x)=U(T,x),$  we can reduce $f(U)$ and $f(v)$  for $t=T$ to get 
\[\begin{array}{l}
\partial_t v(T,x)-\sigma\Delta v(T,x)-q(x)\cdot\nabla_xv(T,x)\vspace{7 pt}\\<\partial_t U(T,x)-\sigma\Delta U(T,x)-q(x)\cdot\nabla U(T,x). 
\end{array}\]
Again, since  $v(T,x)=U(T,x),$  we have \[\Delta v(T,x)=\Delta U(T,x)\text{ and } \nabla_xv(T,x)=\nabla_x U(T,x).\] This implies that 
\[\partial_t v(T,x)<\partial_t U(T,x). \] Hence, $\partial_t(u-v)(T,x)>0$ which means that $(U-v)(T,x)$ is increasing at $t=T.$ Then, there exists $\varepsilon>0$ such that \[U(T-\varepsilon,x)-v(T-\varepsilon,x)<U(T,x)-v(T,x)=0.\]
This contradicts the fact that $v(t,x)\le U(t,x)$ and  proves our claim.
Therefore, \[\begin{array}{cl} \mathcal{I}_T(\lambda U_1^0+(1-\lambda) U_2^0)&=\mathcal{I}_T(\lambda U_1^0(x)+(1-\lambda) U_2^0(x))\vspace{5pt}\\
&=\int_\Omega U(t,x)\,\mathrm{d}x\vspace{5pt}>\int_\Omega v(T,x)\,\mathrm{d}x\vspace{5pt}\\&=\lambda \mathcal{I}_T(U_1^0)+(1-\lambda) \mathcal{I}_T(U_2^0).\end{array}\] This means that $\mathcal{I}_T$ is strictly concave and  guarantees the uniqueness of the maximal element.

In order to prove the uniqueness of the above maximal element, we suppose that there exists $V_1^0$ and $V_2^0$ in $\mathcal{A}_m,$ such that both are maximizers of $\mathcal{I}_T$ in $\mathcal{A}_m.$ Then, \[\mathcal{I}_T(V_1^0)=\mathcal{I}_T(V_2^0)=\sup_{u_0\in\mathcal{A}_m}\mathcal{I}_T(u_0).\] Let $\mu$ be such that $0<\mu<1.$ We then have $$V=\mu V_1^0+(1-\mu)V_2^0\in \mathcal{A}_m,$$ because $\int_\Omega V(x)\,\mathrm{d}x=m.$ Since $\mathcal{I}_T$ is strictly concave, it follows that \[\mathcal{I}_T(V)>\mu \mathcal{I}_T(V_1^0)+(1-\mu)\mathcal{I}_T(V_2^0)=\sup_{u_0\in\mathcal{A}_m}\mathcal{I}_T(u_0).\] However, this contradicts the fact that $V_1^0$ and $V_2^0$ are maximizers of $\mathcal{I}_T$ in $\mathcal{A}_m.$ Therefore, the maximizer of $\mathcal{I}_T$ is unique in the set $\mathcal{A}_m.$
\end{proof}
Now we prove Theorem \ref{th3} under the assumption \eqref{div.q} on the advection field $q.$
\begin{proof}[{\bf Proof of Theorem \ref{th3}.
}]
Let $U_0\in\mathcal{A}_m$ be an arbitrary initial datum for \eqref{eqA}.  This means that $\mathcal{I}_T^A(U_0)=\int_\Omega U_A(T,x)~\mathrm{d}x$ is not necessarily the largest value in the set $I_T^A(\mathcal{A}_m).$
We integrate equation  \eqref{eqA}, in both time and space, to get 
\begin{equation}\label{eq1}
\begin{array}{ll}
\int_\Omega U_A(T,x)\,\mathrm{d}x
=&\int_\Omega U_A(0,x)\,\mathrm{d}x +A\int_0^T\int_\Omega q(x)\cdot\nabla U_A(t,x)\,\mathrm{d}x\,\mathrm{d}t\vspace{7 pt}\\
&+\int_0^T\int_\Omega f(U_A(t,x))\,\mathrm{d}x\,\mathrm{d}t.
\end{array}
\end{equation}
An integration by parts of  the second term on the right hand side of \eqref{eqA} yields
\begin{equation}\label{drifted}
\begin{array}{cl}
\int_\Omega U_A(T,x)\,\mathrm{d}x=&\int_\Omega U_A(0,x)\,\mathrm{d}x -A\int_0^T\int_\Omega(\nabla\cdot q(x))U_A(t,x)\,\mathrm{d}x\,\mathrm{d}t \vspace{7 pt}\\
&+\int_0^T\int_\Omega f(U_A(t,x))\,\mathrm{d}x\,\mathrm{d}t.
\end{array}
\end{equation}
 We claim now that the map $\textstyle{t\mapsto \int_\Omega U_A(t,x)\,\mathrm{d}x}$ is increasing. In fact, if we integrate \eqref{eqA} over $(0,t)\times\Omega$, we then get 
\begin{equation}
\begin{array}{ll}
\int_\Omega U_A(t,x)\,\mathrm{d}x=&m -A\int_0^t\int_\Omega(\nabla\cdot q(x))U_A(s,x)\,\mathrm{d}x\,\mathrm{d}s\vspace{7 pt}\\
&+\int_0^t\int_\Omega f(U_A(s,x))\,\mathrm{d}x\,\mathrm{d}s.
\end{array}
\end{equation}
As $t$ increases, the second and third term on the right hand side  increase in $t$ because the integrand functions  are positive. Hence, our claim follows.

Now we integrate the first equation of \eqref{eqV}  in both variables $t$ and $x$ to get 
\begin{equation}\label{uT}
\begin{array}{lll}
\int_\Omega V(T,x)\,\mathrm{d}x&=&\int_\Omega V_0(x)\,\mathrm{d}x+\int_0^T\int_\Omega f(V)\,\mathrm{d}x\, \mathrm{d}t\vspace{7 pt}\\
&=&m+\int_0^T\int_\Omega f(V)\,\mathrm{d}x\mathrm{d}t.
\end{array}
\end{equation}
Let $$A\ge -\frac{M|\Omega|}{m\alpha}>0,$$ where $\alpha=\max_{x\in\overline{\Omega}}\nabla\cdot q(x)<0$ is the constant appearing in the assumption \eqref{div.q}.
Using the fact that the map $\textstyle{t\mapsto \int_\Omega U_A(t,x)\,\mathrm{d}x}$ is increasing,  we get  \[\int_\Omega U_A(t,x)\,\mathrm{d}x\geq \int_\Omega U_A(0,x)\,\mathrm{d}x, ~~\text{for any $t>0.$}\] 
We will use this as follows:  
\begin{align*}
\mathcal{I}_T^A(U_0)&=     \int_\Omega U_A(T,x)\,\mathrm{d}x\vspace{7 pt}\\
&=m -A\int_0^T\int_\Omega(\nabla\cdot q(x))U_A(t,x)\,\mathrm{d}x\,\mathrm{d}t+\int_0^T\int_\Omega f(U_A(t,x))\,\mathrm{d}x\,\mathrm{d}t.\vspace{7 pt}\\
&\ge m-A\alpha\int_0^T\int_\Omega U_A(t,x)\,\mathrm{d}x\,\mathrm{d}t\ge m-A\alpha\int_0^T\int_\Omega U_A(0,x)\,\mathrm{d}x\,\mathrm{d}t\vspace{7 pt}\\
&\ge m-A\alpha Tm\ge  m+\frac{M|\Omega|}{m\alpha}\alpha Tm= m+ M\int_0^T\int_\Omega\,\mathrm{d}x\,\mathrm{d}t.
\end{align*}
However,  the solution $V$ satisfies $0\leq V\leq 1$ (for the same reasons given in explaining \eqref{0<u<1}). This and the previous inequality then lead to 
\begin{equation}\label{lead.to.enhancement}
\begin{array}{rl}
\mathcal{I}_T^A(U_0)\ge& m+ M\int_0^T\int_\Omega V(t,x)\,\mathrm{d}x\,\mathrm{d}t \vspace{7 pt}\\
\ge& m+ \int_0^T\int_\Omega f(V(t,x))\,\mathrm{d}x\,\mathrm{d}t\vspace{7 pt}\\
=&\int_\Omega V(T,x)\,\mathrm{d}x=\mathcal{I}_T(V_0) ~\text{ (from \eqref{uT})}.
\end{array}
\end{equation}
The passage from the first line to the second in \eqref{lead.to.enhancement} is based on the Mean Value Theorem's application on the function $f,$ which satisfies $\|f'\|_\infty\leq M$ and $f(0)=0:$
\[\forall (t,x), ~f(V(t,x))=f(V(t,x))-f(0)=f'(C(t,x))V(t,x)\leq M V(t,x),\] for some constant $0<C(t,x)<V(t,x).$ 

Recalling that the initial datum $U_0$ was arbitrarily chosen from the set $\mathcal{A}_m,$ and choosing $V_0$ to be such that 
\[\mathcal{I}_T(V_0)=\max_{v_0\in\mathcal{A}_m}\mathcal{I}_T(v_0),\] we conclude that 
\[\inf_{u_0\in \mathcal{A}_m}\mathcal{I}_T^A(u_0)\ge \max_{v_0\in\mathcal{A}_m}\mathcal{I}_T(v_0)\] and this completes the proof of Theorem \ref{th3}.
\end{proof}

\begin{remark}\label{no.contra} We mention that the right hand side of \eqref{drifted} contains the amplitude $A$ of the advection term $q.$ Having the assumption $\nabla\cdot q \leq \alpha<0,$ one needs to make sure that the quantity \[-A\int_0^T\int_\Omega(\nabla\cdot q(x))U_A(t,x)\,\mathrm{d}x\,\mathrm{d}t\] does not diverge to $+\infty,$ when $A\rightarrow +\infty.$ Otherwise, one will have a contradiction with the fact that the left hand side is a bounded quantity {\rm(}bounded below by $0$ and above by $|\Omega|,$ as $0\leq U_A\leq 1${\rm)}. In fact, if $\{A_n\}_n$ is a sequence that goes to $+\infty,$ one can prove that the corresponding solutions $U_{A_n}$ converge (at least in the sense of distributions) to a function $w,$ known as a first integral of the drift $q$ {\rm (}see \cite{ek2}, for e.g.{\rm)}, characterized by \[q\cdot \nabla w=0\text{ a.e. in }\Omega.\] As a consequence, we have \[\begin{array}{ll}
\lim_{A\rightarrow+\infty}\int_\Omega(\nabla\cdot q(x))U_A(t,x)\,\mathrm{d}x&=-\lim_{A\rightarrow+\infty}\int_\Omega q(x) \cdot\nabla U_A(t,x)\,\mathrm{d}x\vspace{7 pt}\\
&=\int_\Omega q(x) \cdot\nabla w(x)\,\mathrm{d}x=0.
\end{array}\]
The latter guarantees that the right hand side remains bounded even when the amplitude $A$ is large. 
\end{remark}
\section{On the proofs of Theorems \ref{th4}, \ref{th5} and \ref{th6}}
The proof of Theorem \ref{th4} is similar to the proof of Theorem \ref{th1}. The main difference appears in Lemma \ref{bounded}. In what follows, we will highlight the differences that would appear in proving an analogue of Lemma \ref{bounded}, which leads to the results in Theorem \ref{th4}. In the setting of Theorem \ref{th4}, after multiplying  both sides of the reaction-advection-diffusiuon equation in \eqref{eqD}, and then integrating by parts, we get the term 
\[\begin{array}{l}
	-\int_0^T\int_\Omega \nabla_x \cdot (D(x)\nabla_x u(t,x))u(t,x)\mathrm{d}x\mathrm{d}t\vspace{7 pt}\\
	=\int_0^T\int_\Omega \nabla_x\cdot(D(x)\nabla_x u(t,x))\mathrm{d}x\mathrm{d}t
	-\int_0^T\int_{\partial \Omega} [\underbrace{\nu\cdot D(x)\nabla_x u(t,x)}_{=0}]u(t,x)~\mathrm{d}x\mathrm{d}t\vspace{7 pt}\\
	\geq\theta \|\nabla_xu\|_{L^2(0,T;L^2(\Omega))}^2.
	\end{array}\]
The latter allows us to conclude that 
\[\theta\|\nabla_xu\|^2_{L^2\left(0,T;L^2(\Omega)\right)}
\le  M\|u\|_{L^\infty\left(0,T;L^2(\Omega)\right)}+B\|u\|^2_{L^2\left(0,T;L^2(\Omega)\right)}+\|u_0\|^2_{L^2(\Omega)},\]
where $\theta$ is the coercivity constant assumed in \eqref{elliptic} on the diffusion matrix $D.$

In \eqref{towardsH-1}, after integrating the second term by parts, we obtain 
\[\begin{array}{ll} \int_\Omega \nabla_x u(t,x)D(x)\nabla_x v(t,x)\,\mathrm{d}x&\le \|D\|_\infty\int_\Omega \nabla_x u(t,x)\nabla_x v(t,x)\,\mathrm{d}x\vspace{7 pt}\\
	&\le\|D\|_\infty \|\nabla_x u\|_{L^2(\Omega)}\|\nabla_x v\|_{L^2(\Omega)},
	\end{array}\]
where $\|D\|_\infty:=\max_{1\leq i,j\leq n}\{\max_\Omega |D_{ij}(x)|\}.$  Hence,
	\[\int_\Omega (\partial_t u)v\,\mathrm{d}x\le \int_\Omega f(u)v\,\mathrm{d}x+\|D\|_\infty\|\nabla u\|_{L^2(\Omega)} \|\nabla v\|_{L^2(\Omega)}+\gamma\|\nabla u\|_{L^2(\Omega)}\|v\|_{L^2(\Omega)}.\] 
	In \eqref{weak}, we take the test function $\varphi$ to be in  $C^\infty\left((0,T)\times\Omega\right)$ and require that it has a  compact support in the space variable $x.$ The diffusion term will then give rise to
	\[\begin{array}{l}\int_0^T\int_\Omega \nabla\cdot (D(x)\nabla_xU^n(t,x))\varphi(t,x)\,\mathrm{d}x\,\mathrm{d}t\vspace{7 pt} \\=\int_0^T\int_\Omega U^n(t,x) \nabla\cdot (D(x)\nabla_x\varphi(t,x))\,\mathrm{d}x\,\mathrm{d}t\vspace{7 pt} \\
		-\int_0^T\int_{\partial\Omega} \nabla_xu(t,x)D(x)\nabla_x\varphi(t,x)\,\mathrm{d}x\,\mathrm{d}t\vspace{7 pt}\\
		=\int_0^T\int_\Omega U^n(t,x) \nabla\cdot (D(x)\nabla_x\varphi(t,x))\,\mathrm{d}x\,\mathrm{d}t,
		\end{array}\] as $\varphi$ is compactly supported in the $x$ variable. The rest of the proof  of Theorem \ref{th4} is the same as that of Theorem \ref{th1}. 
	
	We omit the proofs of Theorem \ref{th5} and Theorem \ref{th6} as they are similar to those of Theorem \ref{th2} and Theorem \ref{th3} respectively.

\end{document}